\newcommand{\cxymatrix}[1]{\vcenter{\xymatrix@=15pt{#1}}}
\newcommand{\urltilde}{\kern -.15em\lower .7ex\hbox{~}\kern .04em}
\renewcommand{\@makecaption}[2]{
\vspace{\abovecaptionskip}%
\sbox{\@tempboxa}{#1. #2}%
\global\@minipagefalse \hbox to \hsize {{\scshape \hfil #1.
#2\hfil}} \vspace{\belowcaptionskip}}
\newcommand{\Ker}{\operatorname{Ker}}
\newcommand{\Aut}{\operatorname{Aut}}
\newcommand{\ZZ}{\mathbb Z}
\newcommand{\QQ}{\mathbb Q}
\newcommand{\PP}{\mathbb P}
\newcommand{\KK}{\mathbb K}
\newcommand{\GG}{\mathbb G}
\newtheorem{theorem}{Theorem}
\newtheorem{proposition}[theorem]{Proposition}
\newtheorem{lemma}[theorem]{Lemma}
\newtheorem{corollary}[theorem]{Corollary}
\newtheorem*{question*}{Question}
\theoremstyle{definition}
\theoremstyle{remark}
\newtheorem{remark}[theorem]{Remark}
\numberwithin{equation}{section}
\begin{document}

\date{}
\title[Connecting orbits in quasiaffine spherical varieties]{Connecting orbits in quasiaffine spherical varieties\\ via $B$-root subgroups}
\thanks{The second author was supported by the Russian Science Foundation, grant no.~25-21-00358}
\author{Roman Avdeev}
\address{%
{\bf Roman Avdeev} \newline HSE University}
\email{suselr@yandex.ru}
\author{Vladimir Zhgoon}
\address{%
{\bf Vladimir Zhgoon} \newline Moscow Institute of Physics and Technology (National Research University)}%Science Research Institute of System Studies, National Research University Higher School of Economics}
\email{zhgoon@mail.ru}

\subjclass[2020]{14R20, 14M27, 14M25, 13N15}

\keywords{Additive group action, locally nilpotent derivation, spherical variety, root subgroup}

\begin{abstract}
Given a connected reductive algebraic group~$G$ with a Borel subgroup~$B$ and a quasiaffine spherical $G$-variety $X$, we prove that every $G$-orbit $Y$ contained in the regular locus of $X$ can be connected by a $B$-normalized additive one-parameter group action with any minimal $G$-orbit in~$X$ containing $Y$ in its closure. As a consequence, we show that the regular locus of~$X$ is transitive for the subgroup in the automorphism group of~$X$ generated by~$G$ and all $B$-normalized additive one-parameter subgroups.
\end{abstract}

\maketitle

%%%%%%%%%%%%%%%%%%%

\section{Introduction}

Let $X$ be an irreducible algebraic variety over an algebraically closed field $\KK$ of characteristic zero.
Every nontrivial action on~$X$ of the group $\GG_a = (\KK,+)$ induces a one-parameter subgroup in the automorphism group $\Aut(X)$, called a \textit{$\GG_a$-subgroup}.
If in addition $X$ is equipped with an action of an algebraic group~$F$ and $R$ is a $\GG_a$-subgroup normalized by~$F$, then we say that $R$ is an \textit{$F$-root subgroup} on~$X$.

The variety $X$ is said to be \textit{spherical} if it admits a regular action of a connected reductive algebraic group~$G$ such that a Borel subgroup $B \subset G$ has an open dense orbit in~$X$.
In this situation, $X$ automatically contains an open $G$-orbit; moreover, there are only finitely many $G$-orbits in~$X$.
In the recent years, a study of $B$-root subgroups on spherical varieties was initiated in the papers~\cite{AA,AZ1,AZ2}.
One motivation for studying $B$-root subgroups is related to the problem of describing the automorphism groups for complete spherical varieties. In this case, the connected component of the group $\Aut(X)$ is algebraic, so a description of $B$-root subgroups on~$X$ enables one to describe the $G$-module decomposition of the Lie algebra of $\Aut(X)$.
Indeed, every $B$-root subgroup induces a $B$-semiinvariant vector field on~$X$, which is a highest weight vector of a simple $G$-module in the Lie algebra of~$\Aut(X)$.

The main goal of this note is to prove the following theorem, which provides another motivation for studying $B$-root subgroups on spherical varieties.

\begin{theorem} \label{thm_reg_locus}
Let $X$ be a quasiaffine spherical $G$-variety \textup(not necessarily normal\textup) and let $Y$ be a $G$-orbit contained in the regular locus of~$X$. Then for every minimal $G$-orbit $Y' \subset X$ containing $Y$ in its closure there exists a $B$-root subgroup on $X$ that connects $Y$ with~$Y'$.
\end{theorem}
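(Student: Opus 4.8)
The plan is to read off from the Luna--Vust combinatorics of $X$ the ray that separates $Y$ from $Y'$, to attach to it a $B$-root subgroup of Demazure type by means of \cite{AA,AZ1,AZ2}, and then to verify by a direct computation that its $\GG_a$-flow carries a generic point of $Y$ into $Y'$. Concretely, I would pass to the coordinate algebra $A=\KK[X]$, which is finitely generated and multiplicity-free as a $G$-module, fix a multiplicatively closed family $\{f_\lambda\}_{\lambda\in\Lambda}$ of $B$-eigenvectors (here $\Lambda$ denotes the weight monoid of $X$), and recall the orbit picture: each $G$-orbit $Z$ corresponds to a colored cone $\mathcal{C}_Z$, one has $Z_1\subseteq\overline{Z_2}$ precisely when $\mathcal{C}_{Z_2}$ is a face of $\mathcal{C}_{Z_1}$, and a generic point $z$ of $Z$ is singled out by the property $f_\mu(z)\neq0\iff\mu\in\mathcal{F}_Z:=\Lambda\cap\mathcal{C}_Z^{\perp}$. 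Since $Y'$ is minimal among the $G$-orbits other than $Y$ whose closure contains $Y$, the cone $\mathcal{C}_{Y'}$ is a facet of $\mathcal{C}_Y$, and this is precisely where the hypothesis $Y\subseteq X_{\mathrm{reg}}$ enters: it forces $\mathcal{C}_Y$ to be a smooth colored cone, so that $\mathcal{C}_{Y'}$ is obtained from $\mathcal{C}_Y$ by deleting exactly one of its extremal rays $\varrho$, which is the ray of a $G$-stable prime divisor of $X$ or of a color of the open orbit.

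Next I would attach to $\varrho$ the $B$-root subgroup $R=\exp(\KK\partial)$ whose locally nilpotent derivation $\partial$ is homogeneous of weight $\delta$ and acts on $B$-eigenvectors by
\[
\partial(f_\lambda)=\langle\lambda,\varrho\rangle\,f_{\lambda+\delta},
\]
with $\delta$ chosen so that $\langle\delta,\varrho\rangle=-1$, so that $\langle\delta,\cdot\rangle$ vanishes on the remaining extremal rays of $\mathcal{C}_Y$, and so that $\langle\delta,\cdot\rangle$ is nonnegative on every other ray of the colored fan. Smoothness of $\mathcal{C}_Y$ is exactly what makes such a $\delta$ available and what secures the compatibility $\langle\lambda,\varrho\rangle\neq0\Rightarrow\lambda+\delta\in\Lambda$ for all $\lambda\in\Lambda$. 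That this prescription then extends to a nonzero $B$-semiinvariant locally nilpotent derivation of all of $A$, and that the corresponding $\GG_a$-action on $\operatorname{Spec}A$ preserves the open subvariety $X$, so that $R$ is a genuine $B$-root subgroup on $X$, is precisely the kind of statement furnished by the theory of $B$-root subgroups of quasiaffine spherical varieties developed in \cite{AA,AZ1,AZ2}.

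It then remains to check that $R$ connects $Y$ with $Y'$, which can be done directly. Fix a generic point $x\in Y$ and set $x_t=\exp(t\partial)(x)$; iterating the displayed formula gives, for every $\mu\in\Lambda$,
\[
f_\mu(x_t)=\sum_{k\ge0}\frac{t^{k}}{k!}\,\Bigl(\prod_{i=0}^{k-1}\langle\mu+i\delta,\varrho\rangle\Bigr)\,f_{\mu+k\delta}(x),
\]
a polynomial in $t$. Using $\langle\mu,\varrho\rangle\ge0$ (as $f_\mu\in\KK[X]$), $\langle\delta,\varrho\rangle=-1$, and the vanishing of $\langle\delta,\cdot\rangle$ on the other extremal rays of $\mathcal{C}_Y$, one sees that the only term that can be nonzero is the one with $k=\langle\mu,\varrho\rangle$, and that it is nonzero exactly when $\mu\in\mathcal{F}_{Y'}$; hence for generic $t$ one has $f_\mu(x_t)\neq0\iff\mu\in\mathcal{F}_{Y'}$, i.e.\ $x_t\in Y'$. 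Taking $\mu\in\mathcal{F}_{Y'}\setminus\mathcal{F}_Y$ (which is nonempty because $\mathcal{C}_{Y'}\subsetneq\mathcal{C}_Y$) shows in addition that $x_t\neq x$, so the orbit $R\cdot x$ is one-dimensional; as it meets $Y$ at $x$ and $Y'$ at $x_t$, the subgroup $R$ connects $Y$ with $Y'$. Carrying out this construction for every minimal $G$-orbit $Y'$ whose closure contains $Y$ then proves the theorem.

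I expect the main obstacle to be the construction of $\partial$ in the middle step: passing from a prescription that only sees the $B$-eigenvectors (the ``toric part'' of $X$) to a bona fide locally nilpotent derivation of the whole algebra $\KK[X]$, verifying its local nilpotency, and securing the weight $\delta$ with the required positivity all rest essentially on \cite{AA,AZ1,AZ2} and on the regularity of $Y$. A further point needing care is the distinction between the case where $\varrho$ is the ray of a $G$-stable prime divisor and the case where it is the ray of a color, the latter requiring the combinatorics of the color valuation (and possibly a modified construction of $\partial$) to be accounted for. Once these points are in place, the orbit-connecting conclusion follows from the elementary computation above.
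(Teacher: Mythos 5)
Your proposal has two genuine gaps, both located exactly where you flag "the main obstacle," and neither is repaired by the references you lean on. First, the prescription $\partial(f_\lambda)=\langle\lambda,\varrho\rangle f_{\lambda+\delta}$ only specifies $\partial$ on the $B$-eigenvectors, which span the highest-weight lines of the simple $G$-constituents of $\KK[X]$ but not $\KK[X]$ itself; a $B$-semiinvariant derivation is not determined by (and in general cannot be reconstructed from) its values on highest-weight vectors, since a $B$-equivariant map between simple $G$-modules is not determined by the image of the highest-weight line. Deciding which such "Demazure-root data" $(\varrho,\delta)$ actually integrate to a locally nilpotent derivation of the full algebra is precisely the open problem that \cite{AA}, \cite{AZ1}, \cite{AZ2} solve only in special cases (affine with additional hypotheses, horospherical); there is no general existence theorem there that your middle step can cite. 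Second, your use of regularity is not available either: there is no known characterization of smooth points of a general spherical variety in terms of its colored cones, so the assertion that $Y\subset X^{\operatorname{reg}}$ forces $\mathcal C_Y$ to be a "smooth colored cone" with the extremal-ray and weight-$\delta$ properties you need (simpliciality, existence of $\delta$ with $\langle\delta,\varrho\rangle=-1$ vanishing on the other rays, the implication $\langle\lambda,\varrho\rangle\neq0\Rightarrow\lambda+\delta\in\Lambda$) has no foundation outside the toric and horospherical settings. The concluding orbit computation is fine once an LND with the stated eigenvector behaviour exists, but that existence is the whole content of the theorem.

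For contrast, the paper avoids the Luna--Vust combinatorics entirely and uses regularity of $Y$ only through the Luna slice theorem. It applies Knop's local structure theorem to a $B$-semiinvariant function $f$ vanishing on the boundary to get $X_f\simeq P_u\times Z$ with $Z$ an affine spherical $L$-variety meeting $Y$ in a single torus orbit $Z_Y\simeq A$; the slice theorem then identifies $Z$ with $L*_{L_0}N_z$, so $X_f$ becomes a homogeneous vector bundle $P*_{L_0}N_z$ over $P/L_0$. The $B$-root subgroups are produced geometrically as fiberwise translations by $B$-semiinvariant global sections, the minimal orbits $Y'$ over $Y$ are matched with highest-weight vectors of the spherical $L_0$-module $N_z$, and Lemma~\ref{extend} (multiplying the LND by a power of $f$) pushes the action from $X_f$ to all of $X$ while fixing the boundary. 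If you want to salvage your route, you would essentially have to prove the local model $X_f\simeq P*_{L_0}N_z$ anyway in order to justify both the existence of $\partial$ and the facet structure at $Y$, at which point you have reproduced the paper's argument.
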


This theorem is a consequence of Theorem~\ref{thm_main} below.

It follows from Theorem~\ref{thm_reg_locus} that, for a quasiaffine spherical $G$-variety~$X$, every $G$-orbit in the regular locus of~$X$ can be connected with the open $G$-orbit via a suitable chain of $B$-root subgroups, which yields the following corollary.

\begin{corollary} \label{crl_reg_locus}
Let $X$ be a quasiaffine spherical $G$-variety \textup(not necessarily normal\textup) and let $\widetilde G$ be the subgroup in $\Aut(X)$ generated by the images of $G$ and all $B$-root subgroups on~$X$.
Then the regular locus of $X$ is a single $\widetilde G$-orbit.
\end{corollary}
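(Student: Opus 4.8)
The plan is to deduce the corollary from Theorem~\ref{thm_reg_locus} by a straightforward induction along the poset of $G$-orbits of~$X$. Write $X_{\mathrm{reg}}$ for the regular locus and $\mathcal{O}$ for the open $G$-orbit of~$X$. First I would record the structural facts to be used: $X_{\mathrm{reg}}$ is open in~$X$, stable under~$G$ and under $\Aut X$, and contains~$\mathcal{O}$. Since $X$ has only finitely many $G$-orbits, $X_{\mathrm{reg}}$ is a finite union of $G$-orbits; and, being the complement of a closed $G$-stable subset, it is upward closed for the partial order $Y_1 \preceq Y_2 \Longleftrightarrow Y_1 \subseteq \overline{Y_2}$ on $G$-orbits, whose greatest element is~$\mathcal{O}$. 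As $\widetilde{G} \subseteq \Aut X$, the subset $X_{\mathrm{reg}}$ is also $\widetilde{G}$-stable, hence a union of $\widetilde{G}$-orbits. Fixing $x_0 \in \mathcal{O}$ we have $\widetilde{G}x_0 \subseteq X_{\mathrm{reg}}$, so it remains to prove the reverse inclusion, namely that every $G$-orbit $Y \subseteq X_{\mathrm{reg}}$ is contained in~$\widetilde{G}x_0$.

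Before the induction I would note one elementary fact: if a $B$-root subgroup~$R$ on~$X$ connects $G$-orbits $Y$ and~$Y'$, meaning that some $y \in Y$ and $y' \in Y'$ lie in a common $R$-orbit, then $Y$ and~$Y'$ lie in a common $\widetilde{G}$-orbit. Indeed $y' \in Ry \subseteq \widetilde{G}y$, and $\widetilde{G}y$ is $G$-stable since $G \subseteq \widetilde{G}$, so it contains both $Gy = Y$ and $Gy' = Y'$.

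I would then carry out the induction downward on $\dim Y$ over the $G$-orbits $Y \subseteq X_{\mathrm{reg}}$. If $\dim Y = \dim X$ then $Y = \mathcal{O}$ and $Y \subseteq \widetilde{G}x_0$ trivially. Otherwise $Y \ne \mathcal{O}$, so $Y$ is not maximal in the finite orbit poset and is therefore covered by some $G$-orbit~$Y'$, i.e.\ $Y \prec Y'$ with no $G$-orbit strictly between them. Then $Y \subseteq \overline{Y'} \setminus Y'$, whence $\overline{Y} \subsetneq \overline{Y'}$ and so $\dim Y < \dim Y'$; and $Y' \subseteq X_{\mathrm{reg}}$ by upward closedness. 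Such a~$Y'$ is precisely a minimal $G$-orbit containing $Y$ in its closure, so Theorem~\ref{thm_reg_locus} yields a $B$-root subgroup on~$X$ connecting $Y$ with~$Y'$, and by the fact above $Y$ and~$Y'$ lie in one $\widetilde{G}$-orbit. By the inductive hypothesis $Y' \subseteq \widetilde{G}x_0$, hence $Y \subseteq \widetilde{G}x_0$ too. This gives $X_{\mathrm{reg}} \subseteq \widetilde{G}x_0$, and therefore $X_{\mathrm{reg}} = \widetilde{G}x_0$ is a single $\widetilde{G}$-orbit, as required.

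I do not expect a serious obstacle: once Theorem~\ref{thm_reg_locus} is in hand, the corollary reduces to bookkeeping in the orbit poset. The points that actually need attention are the basic properties of the regular locus invoked at the start (openness, $G$- and $\Aut X$-stability, $\mathcal{O} \subseteq X_{\mathrm{reg}}$), the remark that an orbit covering~$Y$ is exactly the kind of orbit to which Theorem~\ref{thm_reg_locus} applies, and the strict increase of dimension along the covering relation, which ensures that the downward induction terminates at~$\mathcal{O}$.
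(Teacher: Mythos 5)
Your argument is correct and is exactly the chain-of-root-subgroups argument the paper itself invokes (the paper only states that every $G$-orbit in the regular locus can be connected to the open orbit by a suitable chain of $B$-root subgroups and leaves the bookkeeping implicit). Your downward induction on orbit dimension, together with the observations that the regular locus is $\Aut(X)$-stable and upward closed in the orbit poset and that a covering orbit is precisely a minimal orbit in the sense of Theorem~\ref{thm_reg_locus}, fills in those details correctly.
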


In the case where $G = B = T$ and $X$ is affine (so that $X$ is an affine toric $T$-variety), Theorem~\ref{thm_reg_locus} and Corollary~\ref{crl_reg_locus} were known from~\cite{AKZ}.
  
This note may serve as a complement and further development of one of the main results of the preprint~\cite{Sha} (Theorem~3), which states that the regular locus of an affine spherical $G$-variety $X$ is a single orbit of the subgroup in $\Aut(X)$ generated by~$G$ and all $\GG_a$-subgroups. In fact, Corollary~\ref{crl_reg_locus} gives an affirmative answer to~\cite[Question~1]{Sha}. We remark that the strategy of our proof of Theorem~\ref{thm_reg_locus} goes the same lines as that used in~\cite{Sha}.

\textbf{Acknowledgement.}
We thank Anton Shafarevich for sharing with us the results of his preprint~\cite{Sha} before it was written; this inspired our work.

\section{Preliminaries}

Given an algebraic variety~$X$, we denote by $X^{\operatorname{reg}}$ the regular locus of~$X$, by $\Aut(X)$ the group of automorphisms of~$X$, and by $\KK[X]$ the algebra of regular functions on~$X$.
Given an algebraic group $F$, its connected component of the identity is denoted by~$F^0$.

Suppose that $X$ is an affine algebraic variety. A derivation $\partial$ of $\KK[X]$ is called \textit{locally nilpotent} (LND for short) if for every $f \in \KK[X]$ there is $n \in \ZZ_{\ge0}$ such that $\partial^n(f) = 0$.
Every LND $\partial$ on~$\KK[X]$ defines a $\GG_a$-action on~$\KK[X]$ via the formula $(t,f) \mapsto \exp(t\partial)f$ ($t \in \KK$, $f \in \KK[X]$) and hence induces a $\GG_a$-subgroup in~$\Aut(X)$. By \cite[\S\,1.5]{Fre} this yields a bijection between LNDs on~$\KK[X]$ modulo proportionality and $\GG_a$-subgroups on~$X$. Moreover, if $X$ is equipped with an action of an algebraic group~$F$, then it is easy to see that $\partial$ is $F$-semiinvariant if and only if the corresponding $\GG_a$-subgroup is $F$-normalized (and hence an $F$-root subgroup).

Below we shall need the following elementary facts.

\begin{lemma}
Let $\partial$ be an LND on an algebra $A$ over~$\QQ$ without zero divisors. Let $a,b \in A$ such that $ab\in \Ker(\partial)$; then either $a\in \Ker(\partial)$ or $b\in \Ker(\partial)$.
\end{lemma}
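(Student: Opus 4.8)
The plan is to prove the standard fact that a locally nilpotent derivation on a domain behaves multiplicatively with respect to its kernel. Since $A$ is a $\QQ$-algebra without zero divisors, I would first recall the \emph{degree function} attached to $\partial$: for $0 \neq c \in A$ set $\deg_\partial(c) = \max\{n \ge 0 : \partial^n(c) \neq 0\}$ (finite since $\partial$ is LND), and $\deg_\partial(0) = -\infty$. The key claim is the additivity identity $\deg_\partial(ab) = \deg_\partial(a) + \deg_\partial(b)$ for nonzero $a, b$. Granting this, the lemma is immediate: if $ab \in \Ker(\partial)$ and $ab \neq 0$, then $0 = \deg_\partial(ab) = \deg_\partial(a) + \deg_\partial(b)$ forces $\deg_\partial(a) = \deg_\partial(b) = 0$, i.e. $a, b \in \Ker(\partial)$; and if $ab = 0$ then one of $a, b$ is $0 \in \Ker(\partial)$ since $A$ has no zero divisors.

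To establish the additivity identity, let $m = \deg_\partial(a)$ and $n = \deg_\partial(b)$. By the Leibniz rule, $\partial^{k}(ab) = \sum_{i=0}^{k} \binom{k}{i} \partial^i(a)\,\partial^{k-i}(b)$. For $k > m+n$ every term vanishes (either $i > m$ or $k-i > n$), so $\deg_\partial(ab) \le m+n$. For $k = m+n$, all terms with $i \neq m$ vanish, leaving $\partial^{m+n}(ab) = \binom{m+n}{m}\,\partial^m(a)\,\partial^n(b)$. Here $\partial^m(a) \neq 0$ and $\partial^n(b) \neq 0$ by definition of $m$ and $n$, their product is nonzero because $A$ is a domain, and the binomial coefficient $\binom{m+n}{m}$ is a nonzero element of $\QQ \subseteq A$ (this is the one place characteristic zero, or at least torsion-freeness, is genuinely used). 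Hence $\partial^{m+n}(ab) \neq 0$, giving $\deg_\partial(ab) = m+n$.

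The argument is entirely elementary and there is no real obstacle; the only point requiring a moment's care is the vanishing of the binomial coefficient, which would fail in positive characteristic, so the hypothesis that $A$ is a $\QQ$-algebra must be invoked there. One could also phrase the whole thing without introducing $\deg_\partial$ explicitly — directly computing $\partial^{m+n}(ab)$ via Leibniz — but packaging it through the degree function makes the multiplicativity transparent and is the form in which this fact is reused elsewhere.
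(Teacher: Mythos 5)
Your proof is correct and rests on exactly the same computation as the paper's: applying the Leibniz rule to get $\partial^{m+n}(ab)=\binom{m+n}{m}\partial^m(a)\partial^n(b)$ and using the domain and characteristic-zero hypotheses to conclude this is nonzero unless $m=n=0$. Packaging it as additivity of the degree function $\deg_\partial$ is only a cosmetic reorganization of the paper's argument, so the two proofs are essentially identical.
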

\begin{proof}
Let $n,m$ be the minimal numbers such that $\partial^{n+1}(a)=\partial^{m+1}(b)=0$. Then by the Leibnitz rule we get
\[
\partial^{n+m} (ab)=\sum_{k=0}^{n+m} \binom{n+m}{k} \partial^{k}(a)\partial^{n+m-k}(b)= \binom{n+m}{n}\partial^{n}(a)\partial^{m}(b)=0.
\] 
This implies the vanishing of either $\partial^{n}(a)$ or $\partial^{m}(b)$, which contradicts the assumption.
\end{proof}

\begin{corollary} \label{crl_f_in_kernel}
Let $\partial$ be an LND on an algebra~$A$ over~$\QQ$ and let $f\in A$ be an invertible element. Then $f \in \Ker(\partial)$.
\end{corollary}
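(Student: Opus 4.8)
The plan is to reduce the statement to Lemma~1. The first step I would carry out is the standard observation that any derivation of a $\QQ$-algebra annihilates the unit: applying the Leibniz rule to $1 = 1 \cdot 1$ gives $\partial(1) = 2\,\partial(1)$, hence $\partial(1) = 0$ and $1 \in \Ker(\partial)$.

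Next, since $f$ is invertible we have the factorization $f \cdot f^{-1} = 1 \in \Ker(\partial)$. Applying Lemma~1 to this product (with $a = f$ and $b = f^{-1}$) shows that either $f \in \Ker(\partial)$ or $f^{-1} \in \Ker(\partial)$. In the first case there is nothing more to do. In the second case I would apply the Leibniz rule once more to $f \cdot f^{-1} = 1$, obtaining $0 = \partial(f)\,f^{-1} + f\,\partial(f^{-1}) = \partial(f)\,f^{-1}$, and then multiply by the unit~$f$ to conclude that $\partial(f) = 0$. In either case $f \in \Ker(\partial)$, which is the claim.

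The only delicate point — and the main thing to watch rather than a genuine obstacle — is that Lemma~1 requires $A$ to have no zero divisors; this is harmless in the intended applications, where $A = \KK[X]$ for an irreducible variety~$X$ and hence is a domain. If one wished to avoid invoking the dichotomy, an alternative is to use the degree function $a \mapsto \deg_{\partial}(a) := \max\{\,k : \partial^k(a) \neq 0\,\}$ (finite because $\partial$ is locally nilpotent), which over a domain is additive on products and nonnegative on nonzero elements; then $0 = \deg_{\partial}(1) = \deg_{\partial}(f) + \deg_{\partial}(f^{-1})$ forces $\deg_{\partial}(f) = 0$, i.e.\ $f \in \Ker(\partial)$.
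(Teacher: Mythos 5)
Your argument is correct and is exactly the intended deduction: the paper states this corollary without proof immediately after Lemma~1, and your reduction via $f \cdot f^{-1} = 1 \in \Ker(\partial)$ followed by one more application of the Leibniz rule to resolve the dichotomy (or, equivalently, the additivity of $\deg_{\partial}$) is the expected argument. You are also right to flag the no-zero-divisors hypothesis, which the corollary silently inherits from Lemma~1 and without which the statement can actually fail (e.g.\ $A = \QQ[x,y]/(x^2)$ with $\partial(x)=0$, $\partial(y)=1$ and $f = 1+xy$); this is harmless here since the corollary is only applied to $\KK[X_f]$ with $X$ irreducible.
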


\begin{lemma} \label{extend}
Suppose that $F$ is a connected linear algebraic group, $X$ is an irreducible affine $F$-variety, $f \in \KK[X] \setminus \lbrace 0 \rbrace$ is an $F$-semiinvariant function, $X_f = \lbrace x \in X \mid f(x) \ne 0 \rbrace$ is the corresponding principal open subset of~$X$, and $R$ is an $F$-root subgroup on~$X_f$.
Then there exists an $F$-root subgroup~$\widetilde R$ on~$X$ that preserves~$X_f$ and has on it the same orbits as~$R$.
In addition, $\widetilde R$ may be chosen in such a way that its acts trivially on~$X \setminus X_f$.
\end{lemma}
\begin{proof}
As $f$ is semiinvariant, the set $X_f$ is $F$-stable.
Let $\partial$ be an $F$-semiinvariant LND on~$\KK[X_f]$ corresponding to~$R$ and fix a finite generating set $\{ f_i\}$ for the algebra $\KK[X]$.
For every~$i$, we have $\partial(f_i) = h_i/f^{n_i}$ with $h_i\in \KK[X]$ and $n_i\in \ZZ_{\ge0}$.
Take any integer $N\ge\max\{n_i\}$.
Corollary~\ref{crl_f_in_kernel} yields $\partial(f) = 0$, so $\widetilde{\partial} = f^N\partial$ is also an LND on~$\KK[X_f]$, which is $F$-semiinvariant.
Let $\widetilde R$ be the $F$-root subgroup on~$X_f$ corresponding to~$\widetilde \partial$.
Clearly, $\widetilde\partial(f_i)\in \KK[X]$ for all~$i$, so $\widetilde\partial(\KK[X])\subset \KK[X]$ by the Leibnitz rule.
It follows that $\widetilde \partial$ is an ($F$-semiinvariant) LND on~$\KK[X]$, hence $\widetilde R$ extends to an $F$-root subgroup on the whole~$X$.
Note that for $N \ge\max\{n_i\} + 1$ we actually have $\widetilde\partial(\KK[X])\subset f\KK[X]$, therefore $\widetilde \partial$ vanishes on $\KK[X \setminus X_f]  \simeq \KK[X] / f\KK[X]$ and hence $\widetilde R$ acts trivially on~$X \setminus X_f$.

Let us show that for every point $x \in X_f$ the orbits $R x$ and $\widetilde R x$ coincide.
Note that $f(x) \ne 0$ and let $\mathfrak m \subset \KK[X]$ be the maximal ideal of~$x$.
Taking into account that $f=f(x) \ {\rm mod} \  \mathfrak m$, for every $g \in \mathfrak m$, $t \in \KK$, and the rescaled parameterization $\widetilde t = f(x)^{-N}t$ we have
\begin{multline*}
\exp(-\widetilde t \, \widetilde{\partial}) \exp(t\partial) g =
 \sum\limits_{i=0}^\infty \frac{(-\widetilde t f^N\partial)^i}{i!}(\exp(t\partial) g) \subset \sum\limits_{i=0}^\infty \frac{(-\widetilde t f(x)^N\partial)^i}{i!}(\exp(t\partial) g) + \mathfrak m = \\
\sum\limits_{i=0}^\infty \frac{(-t\partial)^i}{i!}(\exp(t\partial) g) + \mathfrak m = \exp(-t\partial)(\exp(t\partial) g) + \mathfrak m = g + \mathfrak m = \mathfrak m.
\end{multline*} 
It follows that $\exp(-\widetilde t \, \widetilde{\partial}) \exp(t\partial) \mathfrak m \subset \mathfrak m$, but since both expressions are maximal ideals in~$\KK[X]$, we conclude that they coincide.
The latter yields $\exp(t\partial)x=\exp(\widetilde t \, \widetilde{\partial})x$ for all $t \in \KK$, whence $Rx = \widetilde Rx$.
\end{proof}

Let $G$ be a connected reductive algebraic group with a Borel subgroup~$B$ and let $\widetilde X$ be an irreducible quasiaffine $G$-variety.
By~\cite[Theorem~1.6]{PV}, there exists an open $G$-equivariant embedding of $\widetilde X$ in an (irreducible) affine $G$-variety~$X$.
In view of this, in what follows we shall work with irreducible affine varieties $X$ together with a fixed $G$-stable closed boundary subset~$\partial X$ keeping in mind the possibility to extend results to the open quasiaffine subvariety $X \setminus \partial X$.

\begin{lemma}\label{func}
Suppose $X$ is an irreducible affine $G$-variety and $X_0 \subset X$ is a proper $G$-stable closed subvariety.
Let $Y$ be a $G$-orbit in $X \setminus X_0$ and let $D_Y \subset Y$ be a proper $B$-stable closed subvariety.
Then there is a $B$-semiinvariant function $f \in \KK[X]$ vanishing on $X_0 \cup D_Y$ and not vanishing identically on~$Y$.
\end{lemma}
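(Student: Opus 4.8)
The plan is to construct the required function in three stages: first on the single orbit~$Y$, then extend to a suitable $B$-stable open neighborhood using the affine structure, and finally deal with the boundary~$\partial X$ separately. Since $Y$ is a $G$-orbit not contained in~$\partial X$, the closure $\overline Y$ is a $G$-stable closed subvariety of~$X$ meeting the open locus $X \setminus \partial X$. Because $X$ is spherical, $\overline Y$ is again a spherical $G$-variety and $B$ has a dense orbit $Bx_0$ in~$Y$. The complement $Y \setminus Bx_0$ is a union of finitely many $B$-stable divisors in~$Y$, so $D_Y$, being a proper closed $B$-stable subvariety, is contained in the union of $Y \setminus Bx_0$ with finitely many additional $B$-stable prime divisors of~$Y$; in any case $D_Y$ misses the point~$x_0$.

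The first key step is to produce a $B$-semiinvariant function on the affine spherical variety $\overline Y$ that vanishes on~$D_Y$ but not at~$x_0$. Here I would use that $\KK[\overline Y]$ is a multiplicity-free $G$-module, hence also a direct sum of $B$-weight spaces, each one-dimensional in its $B$-isotypic component, so it is spanned by $B$-semiinvariant functions. The vanishing ideal $I(D_Y) \subset \KK[\overline Y]$ is $B$-stable (as $D_Y$ is $B$-stable), hence spanned by the $B$-semiinvariant functions it contains; since $D_Y \ne \overline Y$, this ideal is nonzero, and since $x_0 \notin D_Y$ there must be a $B$-semiinvariant $g \in I(D_Y)$ with $g(x_0) \ne 0$. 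Such a $g$ vanishes on all of $D_Y$ (here one must also absorb the components $Y \setminus Bx_0$, which is automatic once $g(x_0)\neq0$ forces $g$ to be nonvanishing on the dense orbit $Bx_0$, so its zero set on~$Y$ is a proper $B$-stable closed subset — one then replaces $g$ by a product of the finitely many $B$-semiinvariants cutting out exactly the extra components of $D_Y$, if needed).

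The second step is to lift this function from $\overline Y$ to~$X$. Restriction gives a surjection of $G$-modules $\KK[X] \twoheadrightarrow \KK[\overline Y]$ (affine varieties, $\overline Y$ closed in~$X$), and since both sides are multiplicity-free semisimple $G$-modules, this surjection restricts to a surjection on $B$-semiinvariant functions: any $B$-semiinvariant on~$\overline Y$ lifts to a $B$-semiinvariant $\widetilde g$ on~$X$ of the same $B$-weight. Then $\widetilde g$ does not vanish identically on~$Y$ (its restriction is~$g\neq0$) and vanishes on $D_Y$.

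The final step is to kill the boundary. The set $\partial X$ is a closed $G$-stable subset, so its vanishing ideal in $\KK[X]$ is $B$-stable and nonzero, hence contains a nonzero $B$-semiinvariant function~$h$; since $Y \not\subset \partial X$ and — crucially — $Y$ lies in the regular locus while $\partial X$ has codimension at least two (so $\overline Y \not\subset \partial X$ automatically), $h$ does not vanish identically on~$Y$. If $h$ vanishes on all of $D_Y$ we are done after multiplying $\widetilde g$ by~$h$; in general set $f = \widetilde g \cdot h$. This $f$ is $B$-semiinvariant (product of $B$-semiinvariants), vanishes on $\partial X \cup D_Y$, and $f|_Y = (\widetilde g|_Y)(h|_Y)$ is a product of two nonzero functions on the integral scheme~$Y$, hence nonzero. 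I expect the main obstacle to be the bookkeeping in the first step: one must be careful that a single $B$-semiinvariant — or a product of finitely many — actually cuts out a closed set containing the given $D_Y$ and not just \emph{some} proper $B$-stable subvariety; this uses that in a spherical variety the $B$-stable prime divisors are precisely the $B$-stable irreducible hypersurfaces, each of which, when it does not pass through the point of the open $B$-orbit, is a component of the zero locus of a single $B$-semiinvariant, so a suitable product handles any prescribed proper $B$-stable $D_Y$.
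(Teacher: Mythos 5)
Your overall plan --- handle $D_Y$ and $\partial X$ by two separate $B$-semiinvariants and multiply them --- differs from the paper, which starts from a single function $F$ vanishing on $\partial X \cup D_Y$ but not on $Y$, makes its restriction to $\overline Y$ $B$-semiinvariant via Lie--Kolchin, and then uses a $G$-module splitting of the restriction map $\KK[X] \to \KK[\overline Y]$ to replace $F$ by a $B$-semiinvariant with the same image. Your version has a genuine gap at the last step: a nonzero $B$-semiinvariant $h$ in the ideal of $\partial X$ need not be nonzero on $Y$. Its zero locus is merely some proper $B$-stable closed subset containing $\partial X$, and nothing prevents it from containing $\overline Y$ as well; for instance $h$ could lie in $I(\partial X) \cap I(\overline Y)$, which is a nonzero $B$-stable (even $G$-stable) ideal and therefore itself contains $B$-eigenvectors. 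Your appeal to ``$\partial X$ has codimension at least two'' and to $Y$ lying in the regular locus both imports hypotheses absent from the statement and does not address this. If $h$ vanishes on $Y$, so does $f = \widetilde g\, h$, and the proof collapses. The repair is exactly the paper's move: apply Lie--Kolchin to the nonzero image of $I(\partial X)$ (or of the ideal of $\partial X \cup D_Y$) in $\KK[\overline Y]$ and lift the resulting eigenvector back into the ideal using reductivity of $G$; at that point the separate function $\widetilde g$ is no longer needed.

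Two further problems. First, the lemma does not assume $X$ spherical, and the paper invokes it for a general normal irreducible affine $G$-variety before sphericity is imposed, so you may not use multiplicity-freeness of $\KK[\overline Y]$ or an open $B$-orbit in $Y$. Second, even granting sphericity, $\KK[\overline Y]$ is \emph{not} spanned by $B$-semiinvariants (a simple $G$-module contains a single highest-weight line, not a basis of $B$-eigenvectors), and a $B$-stable ideal is not spanned by the $B$-semiinvariants it contains; Lie--Kolchin only produces \emph{one} eigenvector inside each nonzero finite-dimensional $B$-submodule. The conclusion you want from that step --- a $B$-semiinvariant in $I_{D_Y}$ not vanishing identically on $Y$ --- is nevertheless true and needs no sphericity: any nonzero element of $\KK[\overline Y]$ is automatically not identically zero on the dense subset $Y \subset \overline Y$, so one application of Lie--Kolchin to the nonzero $B$-stable ideal $I_{D_Y}$ suffices, and the point $x_0$ together with the bookkeeping about $B$-stable prime divisors being cut out by semiinvariants (which is false in general) can be dropped.
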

\begin{proof}
Let $\overline Y$ (resp. $\overline D_Y$) be the closure of~$Y$ (resp.~$D_Y$) in~$X$.
Consider the natural $G$-equivariant surjective map $\pi \colon \KK[X] \rightarrow \KK[\overline{Y}]$ given by restricting functions.
Since $Y$ is not contained in $X_0 \cup \overline D_Y$, there is a function $h \in \KK[X]$ that vanishes on $X_0 \cup D_Y$ and not on~$Y$.
Then $\pi(h)$ is a nonzero function in~$\KK[\overline Y]$ vanishing on~$D_Y$, hence all functions in the $B$-submodule $\langle B \pi(h) \rangle \subset \KK[\overline Y]$ vanish on~$D_Y$.
By the Lie--Kolchin theorem, $\langle B \pi(h) \rangle$ contains a nonzero $B$-semiinvariant function, hence changing $h$ we may assume that $\pi(h)$ is $B$-semiinvariant.
Since $G$ is reductive, the $G$-submodule $\langle Gh \rangle \subset \KK[X]$ admits a $G$-module decomposition $\langle Gh \rangle=\left.\Ker \pi\right|_{\langle Gh \rangle} \oplus V$ for some $G$-submodule $V \subset \langle Gh \rangle$.
Consider the expression $h=f_0+f$ with respect to this decomposition.
Clearly, $\pi$ maps $V$ isomorphically to its image $\pi(V) = \pi(\langle Gh \rangle)$, so $f$ is $B$-semiinvariant and hence has all the desired properties.
\end{proof}

\section{Tools}

\subsection{Local structure theorem}
We now state the version of the local structure theorem of Knop \cite[Thm. 2.3, Prop. 2.4]{Kn94} (see also~\cite{BLV}) for affine varieties; see \cite[\S\,2]{Kn98}.

Let $X$ be an irreducible affine $G$-variety (not necessarily normal).
Consider a nonzero $B$-semiinvariant function $f \in \KK[X]$ and the corresponding principal open subset $X_f = \lbrace x \in X \mid f(x) \ne 0 \rbrace$.
Let $P$ be the stabilizer in~$G$ of the line~$\KK f \subset \KK[X]$; this is a parabolic subgroup in~$G$ containing~$B$.
Let $P_u$ be the unipotent radical of~$P$.
Consider also one more parabolic subgroup $P(X) \supset B$ defined as the intersection of stabilizers of all $B$-stable prime divisors in~$X$. %??

Consider the following $P$-equivariant map:
\[
\psi \colon X_f \rightarrow \mathfrak g^*, \ x\mapsto l_x,
 \ \text{ where } \ l_x(\xi)=\frac{\xi f}{f}(x).
\]

Fix a point $z \in X_f$; put $l =\psi(z)$,  $L =G_{l}$, and $Z =\psi^{-1}(l)$.

\begin{theorem}
\label{thm_lst}
Under the above assumptions and notation, the following assertions hold.
\begin{enumerate}[label=\textup{(\alph*)},ref=\textup{\alph*}]
\item \label{thm_lst_a}
The image of $\psi$ is a single $P$-orbit isomorphic to $P_u$.
\item \label{thm_lst_b}
$L$ is a Levi subgroup of $P$ and there is a $P$-equivariant isomorphism
\[
P_u \times Z \simeq P*_{L} Z \xrightarrow{\sim} X_f.
\]
\item \label{thm_lst_c}
Assume that $P=P(X)$.
Then the derived subgroup of~$L$ acts trivially on~$Z$.
\end{enumerate}
\end{theorem}

Since all Levi subgroups in~$P$ are conjugate, by changing the point~$z$ within its $P$-orbit we may assume that $L\supset T$.
In this case, $B_L = B \cap L$ is a Borel subgroup of~$L$.

\subsection{Root subgroups on total spaces of linearized vector bundles}

In this subsection, we present a straightforward generalization of the construction used in the proof of~\cite[Corollary~4.5]{AFKKZ}.

Let $F$ be a linear algebraic group, $X$ a smooth irreducible $F$-variety, and $E$ a vector bundle on~$X$ with canonical projection $p \colon E \to X$.

An \textit{$F$-linearization} of~$E$ is an $F$-variety structure on~$E$ such that $p$ is $F$-equivariant and for all $x \in X$, $g \in F$ the induced map of fibers $p^{-1}(x) \to p^{-1}(gx)$, $y \mapsto gy$, is linear.
We say that $E$ is \textit{$F$-linearized} if it is equipped with an $F$-linearization.
In this situation, the space $H^0(X,E)$ of global sections of~$E$ naturally becomes an $F$-module with the action of~$F$ given by the formula $(gs)(x) = g(s(g^{-1}x))$ for all $g \in F$, $s \in H^0(X,E)$, and $x \in X$.

The next result is obtained by a direct check.

\begin{proposition} \label{prop_RS_via_sect}
Suppose that $E$ is $F$-linearized and $s \in H^0(X,E)$ is a nonzero $F$-semiinvariant section of weight~$\chi$.
Then the map $\GG_a \times E \to E$, $(t,y) \mapsto y+ ts(p(y))$, defines an $F$-root subgroup $R$ on~$E$ of weight~$\chi$.
Moreover, the image of the zero section of~$E$ is $R$-unstable.
\end{proposition}

\subsection{Homogeneous fiber bundles}

For details on the material in this subsection, we refer to \cite[\S\,2.1]{Tim}.

Let $F$ be a connected linear algebraic group, $H \subset F$ an algebraic subgroup, and $V$ an irreducible $H$-variety.
Consider the action of $H$ on $F \times V$ given by $h(g,v) = (gh^{-1},hv)$ for all $h \in H$, $g \in F$, and $v \in V$ and let $F *_H V$ denote the respective quotient set.
For every pair $(g,v) \in F \times V$, let $g*v$ denote its image in $F *_H V$.

Suppose that $V$ is covered by $H$-stable quasiprojective open subsets.
Then there is a natural algebraic variety structure on~$F *_H V$ such that
\begin{itemize}
\item
$F *_H V$ is an $F$-variety with respect to the action of $F$ given by $\widetilde g(g*v) = (\widetilde g g)*v$ for all $\widetilde g, g \in F$ and $v \in V$;

\item
the natural map $p \colon F *_H V \to F/H$, $g*v \mapsto gH$, is an $F$-eqiuvariant morphism.
\end{itemize}

Identifying $V$ with a subvariety of $F *_H V$ via the map $v \mapsto e*v$ we see that each fiber $p^{-1}(gH)$ is just~$gV$.
In view of the latter property, $F *_H V$ is said to be the \textit{homogeneous fiber bundle} over $F/H$ with fiber~$V$.

If $V$ is a finite-dimensional $H$-module, then $F *_H V$ becomes an $F$-linearized vector bundle on $F/H$ in a natural way.

For future reference, below we state two further properties of homogeneous fiber bundles.

\begin{proposition} \label{prop_hfb_isom}
Suppose that $X$ is an $F$-variety and there exists an $F$-equivariant morphism $\varphi \colon X \to F/K$.
Then there is a natural $F$-equivariant isomorphism $X \simeq F *_H V$ where $V = \varphi^{-1}(eH)$.
\end{proposition}

\begin{proposition} \label{prop_orb_corr}
The map $O \mapsto FO$ is a bijection between the $H$-orbits in~$V$ and the $F$-orbits in $F *_H V$.
Moreover, this bijection preserves codimensions and inclusion relations between orbit closures.
In particular, $O$ is an open $H$-orbit in~$V$ if and only if $FO$ is an open $F$-orbit in $F *_H V$.
\end{proposition}

\section{Main results}

\subsection{General setup}
\label{ssec_gen_setup}

Let $X$ be an irreducible affine $G$-variety (not necessarily normal) and let $\partial X$ be a proper closed $G$-stable subvariety.

Let $Y$ be a $G$-orbit in~$X \setminus \partial X$, denote by $\overline Y$ its closure in~$X$, let $D_Y \subset Y$ be a proper closed $B$-stable subvariety.
By Lemma~\ref{func} applied to $X_0 = \partial X \cup (\overline Y \setminus Y)$ and~$D_Y$,  there is a $B$-semiinvariant function $f \in \KK[X]$ vanishing on $\partial X \cup (\overline Y \setminus Y) \cup D_Y$ and not vanishing identically on~$Y$.
Put $D = \lbrace x \in X \mid f(x) = 0 \rbrace$, $X_f = X \setminus D$, and $Y_f = Y \setminus D$.
Let $P$ be the stabilizer in~$G$ of the line~$\KK f \subset \KK[X]$.
We now apply Theorem~\ref{thm_lst} to~$f$ and a point $z \in Y_f$; consider the resulting Levi subgroup $L \subset P$ and the closed $L$-stable subvariety $Z = \psi^{-1}(\psi(z)) \subset X_f$.
Put also $Z_Y = Z \cap Y = Z \cap Y_f$; this is a closed $L$-stable subvariety of~$Z$.
Let $N_z$ be the normal space to~$Y$ at~$z$ and let $H$ be the stabilizer of $z$ in~$L$.
Without loss of generality we may assume that $L \supset T$.

The following two statements will be used later in the case where $X$ is spherical.

\begin{proposition} \label{prop_GX-LZ}
The following conditions are equivalent.
\begin{enumerate}[label=\textup{(\arabic*)},ref=\textup{\arabic*}]
\item
$X$ is spherical as a $G$-variety.
\item
$Z$ is spherical as an $L$-variety.
\end{enumerate}
\end{proposition}

\begin{proof}
The claim follows from Theorem~\ref{thm_lst}(\ref{thm_lst_b}).
Indeed, put $B_L = B \cap L$; this is a Borel subgroup of~$L$ satisfying $B = B_LP_u$.
If there is an open $B$-orbit $X' \subset X$, then $X' \subset X_f$ and $X' \cap Z$ is an open $B_L$-orbit in~$Z$.
Conversely, if $Z' \subset Z$ is an open $B_L$-orbit, then $P_uZ'$ is an open $B$-orbit in~$X_f$ and hence in~$X$.
\end{proof}

\begin{proposition} \label{prop_Y_spherical}
Suppose that $Y$ is spherical as a $G$-variety and $D_Y$ is the union of all colors in~$Y$.
Then the following assertions hold.
\begin{enumerate}[label=\textup{(\alph*)},ref=\textup{\alph*}]
\item \label{prop_Y_spherical_a}
$Y_f$ is a single $B$-orbit and $Y_f = Y \setminus D_Y$.
\item \label{prop_Y_spherical_b}
$Z_Y$ is a single $L$-orbit.
\item \label{prop_Y_spherical_c}
The derived subgroup of~$L$ acts trivially on~$Z_Y$.
\end{enumerate}
\end{proposition}

\begin{proof}
Clearly, $Y \setminus D_Y$ is a single $B$-orbit, so $D \cap Y = D_Y$ and we get~(\ref{prop_Y_spherical_a}).
Observe that $Y \setminus D_Y$ is in fact a single $P$-orbit, which yields~(\ref{prop_Y_spherical_b}) by Theorem~\ref{thm_lst}(\ref{thm_lst_b}).
Since the restriction map $\pi \colon \KK[X] \to \KK[\overline Y]$ is a $G$-module homomorphism and $\pi(f) \ne 0$, the lines $\KK f$ and $\KK \pi(f)$ have the same stabilizer in~$G$, which is~$P$.
Observe that $P(Y)$ is the intersection of the stabilizers of all colors in~$Y$, which implies $P \subset P(Y)$.
The inverse inclusion also holds by~\cite[Theorem~3.1]{PV}, so $P = P(Y)$.
Replacing $X$ with $\overline Y$ in the construction preceding Theorem~\ref{thm_lst} and applying part~(\ref{thm_lst_c}) of that theorem, we get~(\ref{prop_Y_spherical_c}).
\end{proof}

\begin{proposition} \label{prop_hfb_lst}
Suppose that $Y \subset X^{\operatorname{reg}}$, $Z_Y$ is a single $L$-orbit, and $Z_Y$ is a unique closed $L$-orbit in~$Z$.
Then the following assertions hold.
\begin{enumerate}[label=\textup{(\alph*)},ref=\textup{\alph*}]
\item \label{prop_hfb_lst_a}
There is an $L$-equivariant isomorphism $Z \simeq L*_{H} N_z$.
\item \label{prop_hfb_lst_b}
There is a $P$-equivariant isomorphism $X_f \simeq P*_{H} N_z$.
\end{enumerate}
\end{proposition}

\begin{proof}
(\ref{prop_hfb_lst_a})
This is implied by the Luna slice theorem; see~\cite[\S\,III.1, Corollary~2]{Lu73} or~\cite[Theorem 6.7]{PV}.

(\ref{prop_hfb_lst_b})
Since $Z_Y$ is a single $L$-orbit, it follows that $Y_f$ is a single $P$-orbit, which is isomorphic to~$P/H$.
Now the claim is obtained by combining part~(\ref{prop_hfb_lst_a}) with Theorem~\ref{thm_lst}(\ref{thm_lst_b}) and Proposition~\ref{prop_hfb_isom}.
\end{proof}

\begin{remark}
In fact, if $Y \subset X^{\operatorname{reg}}$ and $Z_Y$ is a single $L$-orbit, then $Z_Y$ is a unique closed $L$-orbit in~$Z$ if and only if $\KK[N_z]^{H} = \KK$.
\end{remark}

In what follows, we put $\mathcal N = P*_{H} N_z$ for short and regard $\mathcal N$ as a $P$-linearized vector bundle on~$P/H$.
Let $\zeta \subset \mathcal N$ be the image of the zero section.

\begin{theorem} \label{thm_BRS_gen}
Under the assumptions of Proposition~\textup{\ref{prop_hfb_lst}}, there exists a $B$-root subgroup on~$X$ that preserves $X_f$ and $\partial X$ and connects~$Y$ with a $G$-orbit $Y' \ne Y$ containing $Y$ in its closure.
\end{theorem}

\begin{proof}
Since $P/H \simeq P_u \times Z_Y$ is affine, it follows that the space of global sections $H^0(P/H, \mathcal N)$ is nonzero, hence by the Lie--Kolchin theorem it contains a nonzero $B$-semiinvariant section~$s$.
Thanks to Proposition~\ref{prop_RS_via_sect}, $s$ defines a $B$-root subgroup $R$ on~$\mathcal N$ such that $\zeta$ is $R$-unstable.
Recall from Proposition~\ref{prop_hfb_lst}(\ref{prop_hfb_lst_b}) that there is a $P$-equivariant isomorphism $X_f \simeq \mathcal N$ and observe that it takes $Y_f$ to~$\zeta$.
Thus we get a $B$-root subgroup (still denoted by~$R$) on~$X_f$ that connects $Y_f$ with a $P$-orbit $O \subset X_f$ different from~$Y_f$.
Since $Y_f$ is a unique closed $P$-orbit in~$X_f$, it follows that $O$ contains $Y_f$ in its closure.
Then the $G$-orbit $Y' = GO \subset X$ contains $Y$ in its closure and is different from~$Y$.
By Lemma~\ref{extend}, there exists a $B$-root subgroup $\widetilde R$ on the whole~$X$ that preserves $X_f$ and $\partial X$ and has on $X_f$ the same orbits as~$R$.
By construction, $\widetilde R$ connects $Y$ with~$Y'$.
\end{proof}

For every $y \in P/H$, let $\mathcal N_y$ denote the fiber of $\mathcal N$ over~$y$.

\begin{remark}
Since the variety $P/{H}$ is affine, the vector bundle $\mathcal N$ is generated by global sections, that is, for every $y \in P/H$ and $v \in \mathcal N_y$ one can find a section $s\in H^0(P/{H}, \mathcal N)$ such that $s(y) = v$.
This shows that, under the assumptions of Proposition~\ref{prop_hfb_lst}, by a $\GG_a$-action that is not nesessary $B$-normalized one can connect~$Y$ with any $G$-orbit that contains $Y$ in its closure.
\end{remark}

\begin{corollary}
Suppose that $X$ is spherical \textup(not necessarily normal\textup) and $Y \subset X^{\operatorname{reg}}$.
Then there exists a $B$-root subgroup on~$X$ that connects~$Y$ with a $G$-orbit $Y' \ne Y$ containing $Y$ in its closure.
\end{corollary}

\begin{proof}
Choose $D_Y$ to be the union of all colors in~$Y$.
It follows from Proposition~\ref{prop_GX-LZ} that $Z$ is a spherical $L$-variety and hence has a unique closed $L$-orbit.
By Proposition~\ref{prop_Y_spherical}(\ref{prop_Y_spherical_b}), this closed $L$-orbit is~$Z_Y$.
Now the claim follows from Theorem~\ref{thm_BRS_gen}.
\end{proof}

\subsection{The spherical case}

Retain the assumptions and notation of \S\,\ref{ssec_gen_setup} and assume in addition that $X$ is spherical (but not necessarily normal).
From Proposition~\ref{prop_GX-LZ} we know that $Z$ is an affine spherical $L$-variety and hence has a unique closed $L$-orbit.
Choose $D_Y$ to be the union of all colors in~$Y$.
Then $Z_Y$ is a single $L$-orbit by Proposition~\ref{prop_Y_spherical}(\ref{prop_Y_spherical_b}), hence it is a unique closed $L$-orbit in~$Z$.
Then $H$ is a reductive group.
Proposition~\ref{prop_Y_spherical}(\ref{prop_Y_spherical_c}) implies that $H$ contains the derived subgroup of~$L$.
Put $B_H = B \cap H$, so that $B_H^0$ is a Borel subgroup of~$H$.

Recall the $P$-linearized vector bundle $\mathcal N = P *_H N_z$ and the image $\zeta$ of its zero section.

\begin{proposition} \label{prop_B-orb_in_N}
Suppose that $Y \subset X^{\operatorname{reg}}$.
Then all minimal $B$-orbits in~$\mathcal N \setminus \zeta$ are precisely those of nonzero $B_H$-semi\-in\-vari\-ant vectors in~$N_z$.
\end{proposition}

\begin{proof}
Thanks to Proposition~\ref{prop_hfb_lst}(\ref{prop_hfb_lst_b}), there is a $P$-equivariant isomorphism $X_f \simeq \mathcal N$.
Since $H$ contains the derived subgroup of~$L$, it follows that $B$ acts transitively on $P/H$, hence by Proposition~\ref{prop_hfb_isom} there is a $B$-equivariant isomorphism $\mathcal N \simeq B *_{B_H} N_z$.
Being spherical, $X$ contains finitely many $B$-orbits by~\cite[Theorem~1]{Vin} (see also~\cite{Bri86}), hence so do~$X_f$ and~$\mathcal N$.
Then Proposition~\ref{prop_orb_corr} implies that $B_H$ has finitely many orbits in~$N_z$, so they are all stable under homotheties.
Consequently, minimal $B_H$-orbits in $N_z \setminus \lbrace 0 \rbrace$ are in bijection with minimal $B_H$-orbits in $\PP(N_z)$, which are precisely the closed $B_H$-orbits in~$\PP(N_z)$.
Being the product of $B_H^0$ with the center of~$H$, the group $B_H$ is solvable, hence all its orbits are affine.
Thus every closed $B_H$-orbit in~$\PP(N_z)$ is the union of a finite number of points.
Each such point $x$ is fixed by $B_H^0$ and hence corresponds to a $B_H^0$-semiinvariant vector $v \in N_z \setminus \lbrace 0 \rbrace$.
Being a highest-weight vector of a simple $H^0$-submodule in~$N_z$, $v$ is automatically $B_H$-semiinvariant, so $x$ is in fact fixed by~$B_H$.
Thus the minimal $B_H$-orbits in $\PP(N_z)$ are precisely the $B_H$-fixed points, so the minimal $B_H$-orbits in $N_z \setminus \lbrace 0 \rbrace$ are precisely those of nonzero $B_H$-semiinvariant vectors.
Applying Proposition~\ref{prop_orb_corr} we get the claim.
\end{proof}

Observe that the character restriction map $\mathfrak X(B) \to \mathfrak X(B_H)$ is surjective.

\begin{lemma} \label{lemma_B-section}
Suppose that $v \in N_z \setminus \lbrace 0 \rbrace$ is a $B_H$-semiinvariant vector of weight~$\chi_H \in \mathfrak X(B_H)$.
Then, for every character $\chi \in \mathfrak X(B)$ with $\left. \chi \right|_{B_H} = \chi_H$, there exists a unique section $s$ of~$\mathcal N$ such that $s(z) = v$ and $s$ is $B$-semiinvariant of weight~$\chi$.
\end{lemma}

\begin{proof}
The required section $s$ is given and uniquely determined by the formula $s(bz) = \chi(b^{-1})bz$ for all $b \in B$.
\end{proof}

\begin{theorem} \label{thm_main}
Suppose that $Y \subset X^{\operatorname{reg}}$.
Then for every minimal $G$-orbit $Y' \ne Y$ in~$X$ with the property $\overline{Y'} \supset Y$ there exists a $B$-root subgroup on $X$ that preserves $\partial X$ and connects $Y$ with~$Y'$.
\end{theorem}

\begin{proof}
Let $Y' \ne Y$ be a $G$-orbit in~$X$ that is minimal with the property $\overline {Y'} \supset Y$.
Then the intersection $Y' \cap X_f$ contains a $B$-orbit $O$ that is minimal in $X_f \setminus Y_f$.
Recall a $P$-equivariant isomorphism $X_f \simeq \mathcal N$ given by Proposition~\ref{prop_hfb_lst}(\ref{prop_hfb_lst_b}).
Thanks to Proposition~\ref{prop_B-orb_in_N}, under this isomorphism $O$ corresponds to the $B$-orbit of a $B_H$-semiinvariant vector $v \in N_z$.
By Lemma~\ref{lemma_B-section}, there exists a $B$-semiinvariant section $s$ of~$\mathcal N$ such that $s(z) = v$.
Then the $B$-root subgroup on~$\mathcal N$ defined by~$s$ as in Proposition~\ref{prop_RS_via_sect} connects $0$ with~$v$, hence the corresponding $B$-root subgroup on~$X_f$ connects $Y_f$ with~$O$ and thus $Y$ with~$Y'$.
By Lemma~\ref{extend} there exists a $B$-root subgroup on the whole $X$ that preserves $\partial X$ and connects $Y$ with~$Y'$.
\end{proof}

Recall from~\cite[Proposition~2.6]{AA} that for every $B$-root subgroup $R$ on~$X$ there is at most one $B$-stable prime divisor in~$X$ moved by~$R$.
The next proposition, which provides a partial converse to Theorem~\ref{thm_main}, is valid for any $G$-orbit $Y \subset X \setminus \partial X$ (not necessarily contained in~$X^{\operatorname{reg}}$).

\begin{proposition}
Suppose that $R$ is a $B$-root subgroup on~$X$ such that $\partial X$ is $R$-stable, all $B$-stable prime divisors in~$X$ not containing~$Y$ are $R$-stable, and $\overline Y$ is $R$-unstable.
Then there is a $P$-orbit $Y' \subset X_f \setminus Y_f$ such that $R$ connects $Y$ with~$Y'$ and $Y_f \cup Y'$ is closed in~$X_f$.
\end{proposition}

\begin{proof}
Put $B_L = B \cap L$ and $U_L = U \cap L$.
As $D \subset X$ is a union of $B$-stable prime divisors and none of them contains~$Y$, the hypotheses imply that $D$ is $R$-stable, hence so is~$X_f$.
Recall from Proposition~\ref{prop_Y_spherical}(\ref{prop_Y_spherical_a}) that $Y_f$ is the open $B$-orbit in~$Y$, so that $\overline{Y_f} = \overline Y$.
Since $R Y \not\subset \overline Y$, we have $R \overline Y \not\subset \overline Y$, that is, $R \overline{Y_f} \not \subset \overline{Y_f}$.
Thus $RY_f \not \subset \overline{Y_f}$ and by the discussion in~\cite[\S\,2.5]{AA} we have $RY_f = Y_f \cup O$ for some $B$-orbit $O \subset X \setminus \overline Y$ such that $\dim O = \dim Y_f + 1$ and $Y_f \subset \overline{O}$.
Clearly, $O \subset X_f$, hence the set $Z_O = Z \cap O$ is a single $B_L$-orbit.
Consider the natural projection map $\pi_{Z}\colon X_f  \simeq P_u \times Z \rightarrow Z$.
As $R$ commutes with~$U$, it permutes $P_u$-orbits in~$X_f$, hence the action of $R$ on~$X_f$ descends to~$Z$ along~$\pi_Z$ and thereby yields a $B_L$-root subgroup on~$Z$, which will be denoted by~$R'$.
Note that $R'Z_Y = Z_Y \cup Z_O$ and $\dim Z_O = \dim Z_Y + 1$.
Consider the $U_L$-fixed point set $Z^{U_L} \subset Z$.
Thanks to Proposition~\ref{prop_Y_spherical}(\ref{prop_Y_spherical_b},\,\ref{prop_Y_spherical_c}), $Z_Y$ is a single $T$-orbit and $Z_Y \subset Z^{U_L}$.
As $R'$ commutes with~$U_L$, the set $Z^{U_L}$ is $R'$-stable.
It follows that $Z_O \subset Z^{U_L}$ and hence $Z_O$ is also a single $T$-orbit.
Consider the closure $\overline{Z_O}$ of $Z_O$ in~$Z$; this is an irreducible affine subvariety of~$Z$.
Since $\overline{Z_O}$ contains an open $T$-orbit, it has a unique closed $T$-orbit, which is necessarily~$Z_Y$.
Then the relation $\dim Z_O = \dim Z_Y + 1$ yields $\overline{Z_O} = Z_Y \cup Z_O$, which implies that the union $Y_f \cup O$ is a closed subset of~$X_f$.
By~\cite[Proposition~2.7]{Tim}, the subset $P(Y_f \cup O) = Y_f \cup PO$ is also closed in~$X_f$, so the $P$-orbit $PO$ has the desired properties.
\end{proof}

\newpage

\end{document}